\newcommand{\inner}[2]{\left\langle{#1},{#2}\right\rangle}
\newcommand{\norm}[1]{\left\Vert#1\right\Vert}
\newcommand{\G}{{\rm Gr}\,}
\newcommand{\R}{\mathbb{R}}
\newcommand{\N}{\mathbb{N}}
\newtheorem{mtheorem}{Theorem}
\newtheorem{theorem}{\textbf{Theorem}}[section]
\newtheorem{definition}[theorem]{\textbf{Definition}}
\newtheorem{example}{\textbf{Example}}
\newtheorem{lemma}[theorem]{\textbf{Lemma}}
\newtheorem{proposition}[theorem]{\textbf{Proposition}}
\newtheorem{remark}[theorem]{\textbf{Remark}}
\numberwithin{equation}{section}
\newcommand{\eqdef}{\stackrel{\scriptscriptstyle\rm def}{=}}
\begin{document}

\title[Random iterations of paracontractions]{Random iterations of paracontraction maps and applications to feasibility problems}

\author[E. Matias]{Edgar Matias}
\address{Universidade Federal da Bahia, 40.170-110 Salvador, BA, Brasil}
\email{edgarmatias9271@gmail.com}

\author[M. P. Machado]{Majela Pent\' on Machado}
\address{Universidade Federal da Bahia, 40.170-110 Salvador, BA, Brasil}
\email{majela.penton@ufba.br}

\begin{abstract}
In this paper, we consider the problem of finding an almost surely common fixed point of a  family of paracontraction maps indexed on a probability space, which we refer to as the \emph{stochastic feasibility} problem. We show that a 
 random iteration of paracontraction maps driven by an ergodic stationary sequence converges, with probability one,  to a solution of the stochastic feasibility problem, provided a solution exists. As applications, we obtain \emph{non-white noise} randomized algorithms to solve the \emph{stochastic convex feasibility} problem and the problem of finding an almost surely common zero of a collection of maximal monotone operators.

\end{abstract}

\keywords{Stochastic convex feasibility problem, maximal monotone operator, random iteration of maps, skew product, paracontractions.}
\subjclass[2010]{52A22, 37C40, 60G10.}


\maketitle

\section{Introduction} 
\label{sc:intro}

A very common problem in several areas of mathematics is the \emph{convex feasibility} problem, which consists of finding a point belonging to the intersection of a collection $\{C_i,i\in I\}$ of convex closed subsets of $\R^k$,  where $I$ is an index set. 
It arises in numerous applications  including computerized tomography, image restoration, signal processing and many others
\cite{Combettes07,Herman,Combettes97}.

One popular way of solving the convex feasibility problem is to use \emph{projection} algorithms, originated in the works
\cite{Kaczmarz,Agmon,Motzkin,Gubin}, see also
\cite{Combettes,Bauschke}. 
If $I=\{1,\dots,m\}$, a classic version of these algorithms is the \emph{cyclic} projection algorithm, which
generates a sequence $(x_n)$ according to the recursion 
\begin{equation}
\label{eq:suc_proj}
x_{n+1}=P(x_n),
\end{equation}
where $P=P_{C_m}\circ\cdots\circ P_{C_1}$ and $P_{C_i}$ is the projection map onto the set $C_i$.
The iteration above is well suited when $m$ is not too large, however, in some  applications this may not be the case. 

To deal with the large dimensional case, one can perform the projections onto the sets $C_i$ in a random order, rather than sequentially. That is, we can replace the iteration \eqref{eq:suc_proj} by 
\begin{equation}
\label{eq:ran_proj}
X_{n+1} = P_{C_{\xi_n}}(X_n),
\end{equation}
where $(\xi_n)$ is a \emph{white noise}, i.e., $(\xi_n)$ is an  independent and identically distributed (i.i.d.) sequence of random variables, taking values from the index set $I$.
\emph{Randomized projection} algorithms driven by white noise like \eqref{eq:ran_proj} have been widely studied in recent years, see for instance
\cite{Nedic,Polyak,Strohmer} and the references therein.

Furthermore, the random nature of \eqref{eq:ran_proj} also allows to treat the case where the index set $I$ is an infinite (possibly uncountable) set. However, for this case it is more suitable to consider a stochastic reformulation of the convex feasibility problem. 
Namely,   if $\xi$  is a random variable with values in $I$, the \emph{stochastic convex feasibility}
\cite{Butnariu} problem seeks to find $x^*$ such that 
$$
x^{*}\in C_{\xi} \quad \mbox{almost surely}.
$$

It turns out that the stochastic convex feasibility problem can be formulated as finding an almost surely common fixed point for the family of projection maps $P_{C_i}$. Therefore, it can be placed in the broader context of the random fixed point theory. 

In this paper, we 
study  the \emph{stochastic feasibility} problem of finding an almost surely (a.s.) common fixed point of a family of maps.
That is, if 
 $f_{i}\colon M\to M$, $i\in I$, is a family of maps defined on a metric space $M$, the goal is to find   
$x^{*}$ such that 
\begin{equation}
\label{eq:sto_int}
f_{\xi}(x^\ast)=x^\ast\qquad\text{almost surely}.
\end{equation}
Note that for the case where $f_{i}= P_{C_{i}}$, $i\in I$, the problem above is the stochastic convex feasibility problem. 
Another particular instance of \eqref{eq:sto_int} is the problem of finding an a.s. common zero of a family of \emph{maximal monotone operators}, since finding a zero of a maximal monotone operator is equivalent to finding a fixed point of its \emph{resolvent mappings}.



The aim of our paper is to analyze an iterative algorithm for solving \eqref{eq:sto_int} in the case $\{f_{i}, i\in I\}$ is a family of paracontraction maps, see Section \ref{sbs:feasibility} for the definition of paracontraction.
Specifically, 
we consider the random iteration 
\begin{equation}
\label{eq:fr}
X_{n+1} = f_{\xi_n}(X_n),
\end{equation}
where $(\xi_n)$ is an ergodic stationary sequence of random variables taking values in $I$. If the set $C$ consisting of the points that satisfy \eqref{eq:sto_int} is non-empty and $M$ is a separable metric space such that every bounded closed set is compact, we will show that for every initial point $X_{0}=x_{0}\in M$, with probability $1$, the sequence $(X_{n})$ generated by \eqref{eq:fr} converges to some point of $C$, see Theorem \ref{th:one}.

We observe that this theorem was already obtained in 
\cite{Russell} for the case where $(\xi_n)$ is an i.i.d. sequence.
However, in the last years it has been a growing interest in the implementation of randomized algorithms in the non-independent case, since they can be more suitable for applications, for instance in \emph{distributed optimization} see 
\cite{Johansson}. Furthermore, the study of random iterations driven by a non-white noise like \eqref{eq:fr} is interesting in their own right, as they generalize the deterministic and white noise cases. 

As applications of Theorem \ref{th:one}, we will obtain a randomized projection algorithm for solving the stochastic convex feasibility problem (see Section \ref{sbs:convex}) as well as a \emph{randomized proximal point algorithm} for finding an a.s. common zero of a family of maximal monotone operators in the non-independent case (see Section \ref{sbs:proximal}). In particular, this will allow us to obtain Markovian randomized algorithms for both problems.

Finally, let us make some comments regarding the proof of Theorem \ref{th:one}. As mentioned above, this theorem was proved in
\cite{Russell} for the case $(\xi_{n})$ is an i.i.d. sequence
\cite{Russell}. We note that their proof can not be extended to the case $(\xi_{n})$ is a general stationary sequence. Indeed, in 
\cite{Russell}, the authors explore the natural Markov structure provided by $(\xi_{n})$. That is, if $(\xi_{n})$ is an i.i.d. sequence, it is known that the sequence $(X_{n})$ is a homogeneous Markov chain and hence there is a well-defined notion of \emph{stationary measures}. Using a ``loss mass'' argument, they show that every stationary measure  is supported on $C$. Then, the convergence is obtained by manipulating standard convergence results on the space of probability measures endowed with the weak star topology.  
On the other hand, when $(\xi_{n})$ is not an i.i.d. sequence, $(X_{n})$ is no longer a homogeneous Markov chain (even in the case $(\xi_{n})$ is  a Markov chain).


To overcome this situation, we follow a dynamical system approach. Indeed, it turns out that the sequence $(X_{n})$ can be seen as a random orbit of a discrete random dynamical system driven by a measure-preserving dynamical system.
 Thus, instead of exploring stationary measures, we consider \emph{invariant random measures} of the associated random dynamical system. Using the Poincar\' e recurrence theorem we obtain the recurrence of $(X_{n})$ in a certain sense, and we use this result to show that any invariant random measure is supported on $C$. From this, the convergence is obtained using convergence results on the space of random measures endowed with the \emph{narrow topology} (the weak star topology can not be used in general).

\subsection*{Organization of the paper}
In Section \ref{sc:main}, we precisely state the main definitions and results of this work. 
Section \ref{sc:ran_ppa} is devoted to the applications of Theorem \ref{th:one} to solve the stochastic convex feasibility problem and the problem of finding an a.s. common zero of a family of maximal monotone operators.
Section \ref{sc:proofs} presents some preliminaries on the space of random measures and the narrow topology, and the proofs of the results.

\section{Main results}
\label{sc:main}

\subsection{Stochastic feasibility problem}
\label{sbs:feasibility}
Let $(I,\mathscr{I},\nu)$ be a probability space and consider a collection of continuous maps $f_i:M\to M$, $i\in I$, defined on a metric space $(M,d)$. 
The problem of interest consists of finding a point $x^\ast\in M$ such that
\begin{equation}
\label{eq:sto_feas}
x^\ast\in C\eqdef\{x\in M\colon f_i(x)=x \,\mbox{ for } \nu\mbox{-almost every }i\}.
\end{equation}
We call this problem the \emph{stochastic feasibility} problem. We study the following random algorithm for solving \eqref{eq:sto_feas}:
\begin{equation}
\label{alg:1}
X_{n+1}=f_{\xi_n}(X_n)
\end{equation}
starting from an initial constant random variable  $X_0=x_0\in M$, where $(\xi_n)$ is an ergodic stationary sequence of random variables with distribution $\nu$.

Our goal is to show that the sequence $(X_k)$ generated by the random iteration \eqref{alg:1} converges to a solution of the stochastic feasibility problem, under the assumption that the maps are \emph{paracontractions}. Recall that a continuous map $f\colon M\to M$ is called a paracontraction if for every fixed point $x$ of $f$ it holds 
$$
d(f(x),y)<d(x,y)
$$
for every $y$ that is not a fixed point of $f$.

In the space $M=\R^k$, examples of paracontraction maps are the \emph{averaged mappings}. Namely, $f:\R^k\to\R^k$ is an averaged map if there is an $\alpha\in(0,1)$ such that
\begin{equation*}
\norm{f(x)-f(y)}^2 + \dfrac{1-\alpha}{\alpha}\norm{x-f(x)-(y-f(y))}^2 \leq \norm{x-y}^2, \qquad\forall x,y\in\R^k.
\end{equation*}
The projection map $P_{\tilde C}$ onto a convex closed set $\tilde{C}$ and, more generally, the \emph{resolvent mapping} associated with a \emph{maximal monotone operator} $T:\R^k\to\mathcal{P}(\R^k)$ are averaged mappings in $\R^k$ (see Section \ref{sc:ran_ppa} and Lemma \ref{lm:resolv}).

From now on, we assume that $(M,d)$ is a separable metric space such that every closed bounded subset is compact, and $f_i:M\to M$ is a paracontraction map for every $i\in I$.

\begin{mtheorem}\label{th:one}
Assume that $C$ in \eqref{eq:sto_feas} is non-empty. Then, for every initial point $X_0=x_0\in M$, the sequence $(X_n)$ of random variables generated by the random iteration \eqref{alg:1} converges almost surely to a random variable $X\in C$.
\end{mtheorem}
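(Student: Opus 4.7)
The plan is to recast \eqref{alg:1} as a random dynamical system on $M$ driven by a measure-preserving system, and to replace stationary measures (unavailable outside the Markov case) by invariant random measures of the associated skew product. Let $(\Omega,\mathscr{F},\mathbb{P},\theta)$ be a probability space on which $(\xi_n)$ is realized as $\xi_n(\omega)=\xi_0(\theta^n\omega)$ with $\theta$ measure-preserving and ergodic. Define the skew product $\Theta\colon\Omega\times M\to\Omega\times M$ by $\Theta(\omega,x)=(\theta\omega,f_{\xi_0(\omega)}(x))$, so that $X_n(\omega)$ is the $M$-projection of $\Theta^n(\omega,x_0)$.

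First I would establish Fej\'er monotonicity with respect to $C$. Fix $c\in C$. Since $f_i(c)=c$ for $\nu$-almost every $i$ and each such $f_i$ is a paracontraction with fixed point $c$, one has $d(f_i(y),c)\le d(y,c)$ for every $y\in M$ and $\nu$-almost every $i$. Applied pathwise, this yields $d(X_{n+1},c)\le d(X_n,c)$ almost surely, so $d(X_n,c)$ is almost surely non-increasing and convergent. In particular $(X_n)$ is almost surely bounded and, by the hypothesis that closed bounded sets in $M$ are compact, the orbit is almost surely precompact.

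Next, I would produce an invariant random measure from the orbit. Because of precompactness, the empirical occupations $\mu_n^\omega=n^{-1}\sum_{k=0}^{n-1}\delta_{X_k(\omega)}$ form a tight sequence of random probability measures on $M$; a subsequential limit in the narrow topology on random measures (the topology developed in Section \ref{sc:proofs}) is a random probability measure $\mu^\omega$ that is invariant under $\Theta$ in the sense that $(f_{\xi_0(\omega)})_{*}\mu^\omega=\mu^{\theta\omega}$ almost surely. The crux is to show that $\mathrm{supp}(\mu^\omega)\subset C$ almost surely. Here I would apply the Poincar\'e recurrence theorem to $\Theta$ with the $\Theta$-invariant probability $\mathbb{P}\otimes\mu^\omega$ on $\Omega\times M$: for $\mathbb{P}\otimes\mu^\omega$-almost every $(\omega,x)$ the orbit $\Theta^n(\omega,x)$ returns to arbitrarily small neighbourhoods of $(\omega,x)$ infinitely often. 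If, on a set of positive $\mathbb{P}\otimes\mu^\omega$-measure, one had $x\notin C$, then $\nu\{i\colon f_i(x)\ne x\}>0$ and the strict paracontraction inequality $d(f_i(x),c)<d(x,c)$, combined with ergodicity of $\theta$ and the Fej\'er monotonicity above, would force $d(\Theta^n(\omega,x),c)$ to drop by a definite positive amount infinitely often, contradicting recurrence to $(\omega,x)$.

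Finally, to pass from ``accumulation on $C$'' to genuine convergence, fix a countable dense subset $(c_k)$ of $C$, which exists by separability of $M$. Off a single null set, $d(X_n,c_k)$ converges for every $k$, and by the triangle inequality $d(X_n,c)$ converges for every $c\in C$. The support step guarantees that $(X_n)$ has at least one accumulation point in $C$ almost surely, and for any two such accumulation points $x^{*},y^{*}\in C$ the convergence of $d(X_n,x^{*})$ gives simultaneously $0$ (along a subsequence converging to $x^{*}$) and $d(y^{*},x^{*})$ (along a subsequence converging to $y^{*}$), forcing $x^{*}=y^{*}$. Hence $X_n$ converges almost surely to a single random variable $X$ with values in $C$.

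The main obstacle will be the invariance/support step: without the Markov property the ``loss of mass'' argument of \cite{Russell} is unavailable, and one must work with invariant random measures of the skew product rather than stationary measures, using the narrow topology on random measures from Section \ref{sc:proofs} and Poincar\'e recurrence to extract the strict paracontraction gain off of $C$.
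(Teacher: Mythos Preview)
Your overall strategy---skew-product reformulation, Fej\'er monotonicity with respect to $C$, construction of an invariant random measure, Poincar\'e recurrence combined with the strict paracontraction inequality to force its support into $\Omega\times C$, and then the standard Fej\'er argument to upgrade ``accumulation in $C$'' to convergence---is exactly the paper's route (Theorem~\ref{paracontraction}, Proposition~\ref{recurrentfversion}, Lemma~\ref{recurrence}). Your final paragraph even identifies the same obstacle the paper highlights in the introduction.

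There is, however, one genuine gap in the invariance step. The fiberwise empirical occupations $\mu_n^\omega=n^{-1}\sum_{k=0}^{n-1}\delta_{X_k(\omega)}$ do \emph{not}, in general, have narrow accumulation points satisfying $(f_{\xi_0(\omega)})_*\mu^\omega=\mu^{\theta\omega}$. The obstruction is that $X_k(\theta\omega)$ is the $k$-th iterate starting from $x_0$ under the shifted driving sequence, not $X_{k+1}(\omega)$; equivalently, applying $f_{\xi_0(\omega)}$ to $X_k(\omega)$ composes $f_{\omega_0}$ on the \emph{left} of $f_{\omega_{k-1}}\circ\cdots\circ f_{\omega_0}$, which is not $X_{k+1}(\omega)$. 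So pushing your $\mu_n^\omega$ forward by $f_{\xi_0(\omega)}$ is not, even approximately, $\mu_n^{\theta\omega}$, and the limit need not be $\Theta$-invariant.

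The paper sidesteps this by working with the Ces\`aro means $n^{-1}\sum_{k=0}^{n-1}F^{k}_{*}(\mathbb{P}\times\delta_{x_0})$ as measures on $\Omega\times M$; these lie in $\mathcal{P}_{\mathbb{P}}(\Omega\times M)$, and Krylov--Bogoliubov in the narrow topology (Proposition~\ref{pr:krylov}) yields a genuine $F$-invariant limit to which the recurrence/support lemma applies. The paper also first reduces to compact $M$ by restricting to a closed ball around some $c\in C$ (this is where your Fej\'er precompactness observation is actually used), and then uses the invariant measure to show $\int d(X_n,C)\,d\mathbb{P}\to 0$ along a subsequence, hence $d(X_n,C)\to 0$ a.s.\ by monotonicity. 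With this correction to the measure you average, your argument coincides with the paper's.
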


The theorem above was proved in \cite{Russell} in the case where $(\xi_n)$ is an i.i.d sequence. 
Therein, was also obtained the convergence of $(X_n)$ for averaged mappings on separable Hilbert spaces. 

\begin{remark}\emph{
We observe that in Theorem \ref{th:one} we can not expect convergence of the sequence $(X_n)$ if the set $C$ is empty. Indeed, in the case $(\xi_n)$ is an i.i.d. sequence of random variables, $I$ is finite and the maps $f_i$ are contractions for every $i\in I$, it is well-known that, with probability $1$, the $\omega$-\emph{limit} of $(X_n)$ is the \emph{Hutchinson attractor}, which is a non-singleton compact set if $C$ is empty, see 
\cite{fractalsevery}.}
\end{remark}

\subsection{Random iterations of paracontractions maps}
\label{ssc:ripm}

In this section, we consider a reformulation of Theorem \ref{th:one}, which will allow us to use the ergodic theory of random maps for obtaining its proof. To this end, we recall some definitions.

Let $(\Omega,\mathscr{F},\mathbb{P},\theta)$ be an \emph{ergodic measure-preserving dynamical system}, that is, $(\Omega,\mathscr{F},\mathbb{P})$ is a probability space, $\theta\colon \Omega\to \Omega$ is a measurable transformation such that $\mathbb{P}$ is an invariant measure for $\theta$, and  for every $A\in \mathscr{F}$ such that $\theta^{-1}(A)\subset A$ we have either $\mathbb{P}(A)=0$ or $\mathbb{P}(A)=1$. 

Consider a family $\{f_{\omega}\}_{\omega\in \Omega}$ of continuous maps $f_{\omega}\colon M\to M$ defined on the metric space $M$. Assume that the map $(\omega,x)\mapsto f_{\omega}(x)$ is measurable ($M$ is endowed with the Borel $\sigma$-algebra and $\Omega\times M$ with the product $\sigma$-algebra). The map $\varphi\colon \mathbb{N}\times \Omega\times M\to M$ defined by 
  \begin{equation}\label{gri}
  \varphi(n,\omega,x)\eqdef f_{\theta^{n-1}(\omega)}\circ\dots \circ f_{\omega}(x)\quad \text{for}\quad n\geq 1,\
  \end{equation}
is called a \emph{random iteration of maps} on $M$ induced by the family $\{f_{\omega}\}_{\omega\in \Omega}$ and the ergodic measure-preserving dynamical system $(\Omega,\mathscr{F},\mathbb{P},\theta)$. 

We observe that $(\theta^{n})$ is an ergodic stationary sequence on the probability space $(\Omega,\mathscr{F},\mathbb{P})$, and hence the  sequence $(\varphi(n,\omega,x))_{n\in\mathbb{N}}$ is a particular instance of the random iteration \eqref{alg:1} for $I=\Omega$ and $\xi_{n}=\theta^{n}$.

 However, for studying the convergence of the random iteration \eqref{alg:1} it is enough to study the convergence of \eqref{gri}. Indeed, first consider $(W,\mathscr{A}, \mathbf{P})$ the sample space of $(\xi_{n})$ and the measurable map $\Xi\colon W\to I^{\mathbb{N}}$ given by 
$$
\Xi(\mbox{w})=(\xi_{0}(\mbox{w}),\xi_{1}(\mbox{w}),\dots).
$$
Denote by $\mathbb{P}=\Xi_{*}\mathbf{P}$ the \emph{stochastic image} of $\mathbf{P}$ by $\Xi$, that is, $\mathbb{P}(A)=\mathbf{P}(\Xi^{-1}(A))$. Then, the natural  projections $\Pi_{n}\colon I^{\mathbb{N}}\to I$ defined by
 $$
 \Pi_{n}(\omega)=\omega_{n}, \qquad \omega=(\omega_{i})_{i\in \mathbb{N}}
 $$
constitute an ergodic stationary sequence on the probability space $(I^{\mathbb{N}},\hat{\mathscr{I}},\mathbb{P})$, where $\hat{\mathscr{I}}$ is the product $\sigma$-algebra.
Thus, the relation $\mathbb{P}=\Xi_{*}\mathbf{P}$ says that to study the convergence of the random iteration in \eqref{alg:1} it is sufficient to consider the case where $(\xi_{n})$ is taken in its canonical form, i.e.,  as the natural projections on a product space.
  
Now, we observe that the random iteration \eqref{alg:1} for the sequence $(\Pi_{n})$ can be fit into the context of \eqref{gri}. Indeed, $\mathbb{P}$ is ergodic for the shift map $\sigma\colon I^{\mathbb{N}}\to I^{\mathbb{N}}$ defined by
  $$
  \sigma((\omega_{i})_{i\in \mathbb{N}})=(\omega_{i+1})_{i\in \mathbb{N}}
  $$ 
and if we define the family of maps $\{f_{\omega}\}_{\omega\in I^{\mathbb{N}}}$ by $f_{\omega}=f_{\omega_{0}}$, then we have that the random iteration of maps $\varphi$ generated by this family and the ergodic measure-preserving dynamical system  $(I^{\mathbb{N}},\hat{\mathscr{I}}, \mathbb{P},\sigma)$ is given by 
$$
\varphi(n,\omega,x)= f_{\sigma^{n-1}(\omega)}\circ\dots \circ  f_{\omega}(x)=f_{\omega_{n-1}}\circ \dots \circ f_{\omega_{0}}(x),
$$
which is exactly the random iteration \eqref{alg:1} with $\xi_{n}=\Pi_{n}$.

We now state the reformulation of Theorem \ref{th:one}. Given a random iteration of maps $\varphi$ induced by a family of maps $\{f_{\omega}\}_{\omega\in \Omega}$ and an ergodic measure-preserving dynamical system $(\Omega,\mathscr{F},\mathbb{P},\theta)$, we define  
$$
C_{\varphi}\eqdef\{x\in M\colon \mathbb{P}(\omega\in \Omega\colon f_{\omega}(x)=x)=1\}.
$$

\begin{mtheorem}\label{paracontraction}
Consider a random iteration of paracontraction maps $\varphi$ on the metric space $M$ over the ergodic measure-preserving dynamical system $(\Omega,\mathscr{F}, \mathbb{P},\theta)$.
Assume that $C_{\varphi}\neq \emptyset$.
Then, for every $x\in M$, for $\mathbb{P}$-almost every $\omega$ the sequence $(\varphi(n,\omega,x))_{n\in \mathbb{N}}$ converges to some point of $C_{\varphi}$ (depending on $\omega$ and $x$).
\end{mtheorem}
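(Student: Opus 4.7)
The plan is to execute the invariant-measures strategy outlined in the introduction, combining the paracontraction-driven Fej\'er monotonicity with the ergodic theory of the skew product $\Theta(\omega,y)=(\theta\omega,f_\omega(y))$ on $\Omega\times M$. Fix $c\in C_\varphi$: the set $\{\omega:f_\omega(c)=c\}$ has full $\mathbb P$-measure, and by $\theta$-invariance of $\mathbb P$ the countable intersection $\bigcap_{n\ge 0}\theta^{-n}\{\omega:f_\omega(c)=c\}$ also has full measure; on it $c$ is a fixed point of every $f_{\theta^n\omega}$, so paracontraction yields
$$d(\varphi(n+1,\omega,x),c)\le d(\varphi(n,\omega,x),c),$$
whence $d(\varphi(n,\omega,x),c)\downarrow L(\omega,c)$ and the orbit lies in the compact ball $B=\{y\in M:d(y,c)\le d(x,c)\}$. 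Intersecting these full-measure events over a countable dense $\{c_k\}\subseteq C_\varphi$ produces a single $\omega$-set of measure one on which $d(\varphi(n,\omega,x),c_k)$ converges for every $k$. On this set, if the orbit has even one cluster point $y^\ast\in C_\varphi$, then $L(\omega,y^\ast)=0$ by monotonicity, forcing $\varphi(n,\omega,x)\to y^\ast$; uniqueness of the cluster point follows from $L(\omega,c_k)=d(y^\ast,c_k)$ and density of $\{c_k\}$ in $C_\varphi$. It therefore suffices to produce, almost surely, at least one cluster point in $C_\varphi$.

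For this, consider the empirical random measures $\mu^\omega_N=\tfrac{1}{N}\sum_{n=0}^{N-1}\delta_{\varphi(n,\omega,x)}\in\mathcal P(B)$; compactness of $B$ together with the narrow topology on random measures (developed in Section \ref{sc:proofs}) allows one to extract a subsequence $\mu^\omega_{N_j}\to\mu^\omega_\infty$, and a Krylov--Bogolyubov-type argument on $\Theta$ shows $\mu^\omega_\infty$ is a $\varphi$-invariant random measure with $\Omega$-marginal $\mathbb P$. The key lemma is that every such $\mu^\omega_\infty$ is $\mathbb P$-almost surely supported on $C_\varphi$. Set $\mu=\int\mu^\omega_\infty\,d\mathbb P(\omega)$ on $\Omega\times M$; the bounded function $\phi(\omega,y)=d(y,c)$ satisfies $\phi\circ\Theta\le\phi$ a.e., and $\Theta$-invariance of $\mu$ upgrades this to equality $\mu$-a.e. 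The strict form of paracontractivity then forces $f_\omega(y)=y$ for $\mu$-a.e.\ $(\omega,y)$, and iterating through $\Theta$-invariance gives
$$\mu\bigl\{(\omega,y):f_{\theta^n\omega}(y)=y\ \text{for all}\ n\ge 0\bigr\}=1.$$
Hence $f_\omega$ acts as the identity $\mu^\omega_\infty$-almost everywhere, so $(f_\omega)_\ast\mu^\omega_\infty=\mu^\omega_\infty$; combined with the invariance relation $\mu^{\theta\omega}_\infty=(f_\omega)_\ast\mu^\omega_\infty$ this gives $\mu^\omega_\infty=\mu^{\theta\omega}_\infty$, and ergodicity of $\theta$ collapses $\omega\mapsto\mu^\omega_\infty$ to a deterministic measure $\mu^\ast$. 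Writing $\mu=\mathbb P\otimes\mu^\ast$ and applying Fubini to the set above yields $\mathbb P\{\omega:f_\omega(y)=y\}=1$ for $\mu^\ast$-a.e.\ $y$, i.e.\ $\mu^\ast(C_\varphi)=1$.

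With the lemma in hand, narrow convergence together with the closedness of $C_\varphi$ (which follows from continuity of each $f_\omega$ via a countable exceptional-set argument) gives $\mu^\omega_{N_j}(U)\to 1$ for every open $U\supseteq C_\varphi$; by compactness of $B$ the orbit then admits a cluster point in $C_\varphi$, and the reduction from the first paragraph concludes. I expect the hard part to be the key lemma, and within it the quantifier swap from ``$f_\omega(y)=y$ $\mu$-a.e.''\ to the stronger statement $y\in C_\varphi$; resolving it requires both the ergodicity of $\theta$ (to force $\mu^\omega_\infty\equiv\mu^\ast$) and Fubini applied to the product factorization $\mu=\mathbb P\otimes\mu^\ast$. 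A secondary obstacle is that $\Omega$ need not be Polish, so the standard weak-$\ast$ compactness of $\mathcal P(\Omega\times M)$ is unavailable, and the convergence of the empirical measures and the invariance of their limits have to be carried out in the narrow-topology framework of random measures set up in Section \ref{sc:proofs}.
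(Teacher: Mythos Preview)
Your overall scaffolding matches the paper's: reduce to a compact ball via Fej\'er monotonicity, run Krylov--Bogolyubov on the skew product to produce a $\varphi$-invariant measure, show every such measure lives on $\Omega\times C_\varphi$, and conclude by monotonicity. The genuine difference is in the proof of the key lemma. The paper argues by contradiction via Poincar\'e recurrence: if $\mu(\Omega\times V)>0$ for some open $V$ disjoint from $C_\varphi$, a $\mu$-typical point $(\bar\omega,\bar x)$ with $\bar x\in V$ must revisit $\Omega\times V$ infinitely often, yet (using a separate lemma plus Birkhoff applied to $\theta$) some iterate $f_{\theta^k\bar\omega}$ strictly moves $\bar x$ toward a fixed $c\in C_\varphi$, after which monotonicity traps the orbit in a smaller ball and prevents return. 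Your argument instead uses the super-harmonic function $\phi(\omega,y)=d(y,c)$: $\Theta$-invariance of $\mu$ upgrades $\phi\circ\Theta\le\phi$ to equality $\mu$-a.e., hence $f_\omega(y)=y$ $\mu$-a.e., and then you invoke ergodicity of $\theta$ to collapse the disintegration to a product and apply Fubini. This is a clean alternative that avoids the recurrence machinery, at the cost of leaning on disintegration.

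Two gaps deserve attention. First, the relation $\mu^{\theta\omega}_\infty=(f_\omega)_\ast\mu^\omega_\infty$ that you invoke is the characterization of $\varphi$-invariance \emph{when $\theta$ is invertible}; the paper's $\theta$ is only a measurable endomorphism (e.g.\ the one-sided shift), and then only a conditional-expectation version holds. Your conclusion $\mu^\omega_\infty\equiv\mu^\ast$ can still be salvaged: from $(f_\omega)_\ast\mu^\omega_\infty=\mu^\omega_\infty$ and $\Theta$-invariance one checks that for each bounded $g$ the scalar measure $G\,d\mathbb P$, with $G(\omega)=\int g\,d\mu^\omega_\infty$, is itself $\theta$-invariant, and ergodicity of $\mathbb P$ forces $G$ constant. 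Second, and more serious, your endgame ``narrow convergence gives $\mu^\omega_{N_j}(U)\to 1$'' overstates what the narrow topology delivers: it guarantees convergence of the integrals $\int\!\int g\,d\mu^\omega_{N_j}\,d\mathbb P$, not pointwise-in-$\omega$ weak convergence, so you cannot directly extract a cluster point in $C_\varphi$ for each fixed $\omega$. The paper closes this by testing against $g(\omega,y)=d(y,C_\varphi)$: narrow convergence gives $\int\frac{1}{N_j}\sum_{n<N_j}d(\varphi(n,\omega,x),C_\varphi)\,d\mathbb P\to 0$, and since $n\mapsto d(\varphi(n,\omega,x),C_\varphi)$ is nonincreasing the Ces\`aro average dominates the last term, forcing $\int d(\varphi(N_j,\omega,x),C_\varphi)\,d\mathbb P\to 0$; monotonicity again yields $d(\varphi(n,\omega,x),C_\varphi)\to 0$ a.e., and compactness then supplies the cluster point you need.
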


From the discussion above, it is clear that Theorem \ref{th:one} follows from Theorem \ref{paracontraction}.

\section{Applications}
\label{sc:ran_ppa}

In this section, we present non-white noise randomized algorithms to solve the stochastic convex feasibility problem and the problem of finding an almost surely common zero of a collection of maximal monotone operators.


\subsection{Convex feasibility problem}
\label{sbs:convex}
Let $(I,\mathscr{I},\nu)$ be a probability space, and consider a family of non-empty closed convex sets $\{C_{i},i\in I\}$ in $\R^k$. The stochastic convex feasibility problem
seeks to find a point $x^{*}$ such that 
\begin{equation}
\label{eq:1.3}
x^{*}\in \{x\in \mathbb{R}^{k}\colon x\in C_{i}\,\mbox{ for }\nu\mbox{-almost every }i\}\eqdef C^\ast,
\end{equation}
or equivalently, to find $x^{*}$ such that there is a set $I_{0}$ of $\nu$-full measure satisfying 
$$
x^{*}\in \bigcap_{i\in I_{0}} C_{i}.
$$

Here, we consider the following \emph{randomized projection algorithm} for solving \eqref{eq:1.3}:
\begin{equation}
\label{eq:rit}
X_{n+1}=X_n+\alpha(P_{C_{\xi_n}}(X_n)-X_n),
\end{equation}
where $X_0=x_0\in\R^k$ is an arbitrary initial point, $(\xi_n)$ is an ergodic stationary sequence of random variables with distribution $\nu$ and $\alpha\in(0,2)$. The convergence of the algorithm above will be obtained as a consequence of Theorem \ref{th:one}. To the best of our knowledge, this is the first time that it is established convergence for randomized projection algorithms to solve \eqref{eq:1.3} in the non-independent case, generalizing previous convergence results for this type of algorithms.

In particular, we observe that in
\cite{Nedic} it was studied the random algorithm
$$X_{n+1}=X_n+\alpha_n(P_{C_{\xi_n}}(X_n)-X_n),$$
where $\alpha_n>0$. For the algorithm above, it was proved convergence to a point in $C^\ast$ assuming that $(\xi_n)$ is i.i.d and $(\alpha_n)\in\ell_2\backslash\ell_1$. If we take $\alpha_n\equiv\alpha\in(0,2)$, then the iteration above falls within the framework of \eqref{eq:rit} and, consequently, we can establish convergence relaxing the assumption on the sequence of random variables $(\xi_n)$, as well as on the sequence of stepsizes $(\alpha_n)$.


\begin{mtheorem}\label{projectionstationary}
Assume that $C^\ast$ in \eqref{eq:1.3} is non-empty. Then, for every initial point $X_0=x_0\in\R^k$, the sequence $(X_n)$ of random variables generated by \eqref{eq:rit} converges almost surely to a random variable $X\in C^\ast$.
\end{mtheorem}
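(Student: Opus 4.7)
The plan is to realize Theorem \ref{projectionstationary} as a direct corollary of Theorem \ref{th:one}. Setting $f_{i}(x)\eqdef x+\alpha(P_{C_{i}}(x)-x)=(1-\alpha)x+\alpha P_{C_{i}}(x)$ for each $i\in I$, the recursion \eqref{eq:rit} becomes exactly $X_{n+1}=f_{\xi_{n}}(X_{n})$, which matches \eqref{alg:1}. To apply Theorem \ref{th:one}, I need two things: that each $f_{i}$ is a paracontraction on $\R^{k}$, and that the common fixed-point set $C$ coincides with $C^{\ast}$.

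For the paracontraction property, my approach is to show that $f_{i}$ is an averaged mapping (which in turn is a paracontraction, as stated in Section \ref{sbs:feasibility}). Since $\alpha$ is allowed to range in $(0,2)$, not $(0,1)$, I cannot apply the averaged-map inequality with $P_{C_i}$ directly; instead I use the fact that $P_{C_{i}}$ is \emph{firmly nonexpansive}, i.e., $1/2$-averaged. The classical identity
$$
\norm{(1-\alpha)x+\alpha P_{C_i}(x)-(1-\alpha)y-\alpha P_{C_i}(y)}^{2}=(1-\alpha)\norm{x-y}^{2}+\alpha\norm{P_{C_i}(x)-P_{C_i}(y)}^{2}-\alpha(1-\alpha)\norm{(x-P_{C_i}(x))-(y-P_{C_i}(y))}^{2}
$$
combined with the firm nonexpansiveness bound $\norm{P_{C_i}(x)-P_{C_i}(y)}^{2}\le \norm{x-y}^{2}-\norm{(x-P_{C_i}(x))-(y-P_{C_i}(y))}^{2}$ yields, after rearrangement,
$$
\norm{f_{i}(x)-f_{i}(y)}^{2}+\frac{1-\alpha/2}{\alpha/2}\norm{x-f_{i}(x)-(y-f_{i}(y))}^{2}\le \norm{x-y}^{2},
$$
which exhibits $f_{i}$ as an averaged map with parameter $\alpha/2\in(0,1)$, and hence as a paracontraction.

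For the identification of fixed-point sets, I will observe that $f_{i}(x)=x$ if and only if $\alpha(P_{C_{i}}(x)-x)=0$, which, since $\alpha\neq 0$, is equivalent to $P_{C_{i}}(x)=x$, i.e., $x\in C_{i}$. Therefore, with $\nu$ the common law of $\xi_{n}$,
$$
C=\{x\in\R^{k}\colon f_{i}(x)=x\text{ for }\nu\text{-a.e.\ }i\}=\{x\in\R^{k}\colon x\in C_{i}\text{ for }\nu\text{-a.e.\ }i\}=C^{\ast}.
$$
By hypothesis $C^{\ast}\neq\emptyset$, so $C\neq\emptyset$. Since $\R^{k}$ is a separable metric space in which closed bounded sets are compact, Theorem \ref{th:one} applies and gives almost sure convergence of $(X_{n})$ to a random variable taking values in $C=C^{\ast}$.

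The only genuinely non-routine step is the averagedness verification above; everything else is a matter of matching notation. I expect no serious obstacle, since the firm nonexpansiveness of metric projections onto convex closed subsets of $\R^{k}$ is standard and the paper already signals (via Lemma \ref{lm:resolv}) that the averaged-map framework is in place.
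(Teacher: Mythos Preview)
Your argument is correct and essentially parallel to the paper's, but the route is slightly different. The paper proves Theorem~\ref{projectionstationary} as a corollary of Theorem~\ref{th:conv_ran_prox}: it observes that the normal cone $N_{C_i}$ is maximal monotone with $P_{C_i}=(\mathrm{Id}+N_{C_i})^{-1}$, so \eqref{eq:rit} is a special case of \eqref{eq:ran_ppa}, and then invokes Theorem~\ref{th:conv_ran_prox}, whose proof in turn uses Lemma~\ref{lm:resolv} (averagedness with parameter $\alpha/2$) to reduce to Theorem~\ref{th:one}. You bypass the maximal monotone layer and go straight from firm nonexpansiveness of $P_{C_i}$ to the $\alpha/2$-averagedness of $f_i$, then apply Theorem~\ref{th:one} directly. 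The paper explicitly remarks that such a direct proof is possible. Your approach is a bit more elementary and self-contained; the paper's approach has the advantage of exhibiting Theorem~\ref{projectionstationary} as an instance of the more general Theorem~\ref{th:conv_ran_prox}. The key computation---that the relaxation of a firmly nonexpansive map by $\alpha\in(0,2)$ is $\alpha/2$-averaged---is the same in both.
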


As a corollary of  Theorem \ref{projectionstationary}, we obtain a \emph{Markovian randomized projection algorithm}, i.e., algorithm \eqref{eq:rit} with $(\xi_{n})$ being an ergodic stationary Markov chain.
Let us illustrate the Markovian randomized projection algorithm in the case $I$ is finite.

\begin{example}
\emph{Let $I=\{1,\dots, m\}$ and consider  an irreducible transition matrix $P=(p_{ij})$, $1\leq i,j\leq m$.  It follows from the Perron-Frobenius theorem 
that there is a unique stationary probability vector $p=(p_{1},\dots, p_{m})$. Moreover, 
$p_{i}>0$ for every $i=1,\dots, m$. Let $(\xi_{n})$ be a Markov chain with state space $I$, having transition probability matrix $P$ and initial distribution $\nu$ given by 
$$
\nu=p_{1}\delta_{1}+\dots+p_{m}\delta_{m},
$$
where $\delta_{i}$ is the Dirac measure at $i$. It is well-know that $(\xi_{n})$ is an ergodic stationary Markov chain.  Then,
it follows from Theorem \ref{projectionstationary} that the sequence $(X_{n})$ in \eqref{eq:rit} converges to a point in the set $C^\ast$ defined in \eqref{eq:1.3}.
We observe that since $I$ is the only set with $\nu$-full measure, the stochastic feasibility problem in \eqref{eq:1.3} coincides with the convex feasibility problem. Thus, the sequence $(X_{n})$ actually converges to a point in the intersection $\cap_{i\in I} C_{i}$.
}
\end{example}

\subsection{Randomized proximal point algorithm}
\label{sbs:proximal}

In this section, we consider the problem of finding an almost surely common zero of a collection of maximal monotone operators.
We observe that this problem can be seen as a stochastic feasibility problem. However, the structure of maximal monotone operators allows us to consider a randomized  algorithm that generalizes the algorithm presented in Section \ref{sbs:convex}. 
The problem of finding a common zero of a family of maximal monotone operators has been studied in several works, see for instance 
\cite{Combettes04,Kiwiel} and references therein.

We first present some preliminaries on maximal monotone operators and their properties. 
An  \emph{operator} $T$ on the space $\R^k$ is a  set-valued mapping $T:\R^k\to\mathcal{P}(\R^k)$, where $\mathcal{P}(\R^k)$ is the power set of $\R^k$.
The operator $T$ can  be  equivalently identified with its \emph{graph}, $ \G(T)\eqdef\left\{(x, v)\in \R^k\times\R^k : v \in T (x)\right\}$. 
The inverse of the operator $T$ is $T^{-1}:\R^k\to\mathcal{P}(\R^k)$ defined as $T^{-1}(v)\eqdef\{x : v\in T(x)\}$. We say that $T$ is \emph{monotone} if
\begin{equation}
\label{eq:monot}
\inner{x'-x}{v'-v}\geq 0,\quad \forall\, (x,v),(x',v')\in \G(T).
\end{equation}
Further, a monotone operator $T$ is said to be \emph{maximal} if its graph is not properly contained in the graph of any other monotone operator. 
A typical example of a maximal monotone operator is the \emph{subdifferential} $\partial f$ of a \emph{proper} lower semicontinuous convex function $f:\R^k\to(-\infty,+\infty]$.

The set $T^{-1}(0)=\{x\,:\,0\in T(x)\}$ is  called the  set  of  zeroes of the operator $T$, which is closed and convex if $T$ is maximal monotone.
A vast variety of problems such as convex or linear programming, monotone complementarity problems, variational inequalities, and equilibrium problems, can be regarded as finding a zero of a maximal monotone operator: 
\begin{equation}
\label{eq:laranja}
\text{find }\,x\in\R^k \text{ shuch that } 0\in T(x).
\end{equation}
More generally, we can consider the problem of finding an  almost surely common zero of a collection of maximal monotone operators. Namely, considering a family of maximal monotone operators $\{T_{i},i\in I\}$ defined on $\mathbb{R}^{k}$, the goal is to find $x^{*}\in \mathbb{R}^{k}$ such that
\begin{equation}\label{eq:out1}
0\in T_{i}(x^{*})\ \quad \mbox{for}\quad \nu\mbox{-almost every $i$}.
\end{equation}

A well-known technique for solving \eqref{eq:laranja} is the \emph{proximal point algorithm} 
\cite{Rockafellar}. This algorithm can be interpreted as a fixed point algorithmic framework using the \emph{resolvent} mappings associated with the operator $T$, which we briefly discuss now.

For each $u\in\R^k$ and $\lambda>0$, there is a unique $x\in\R^k$ such that
$$u\in(\mbox{Id}+\lambda T)(x),$$
where $\mbox{Id}$ is the identity mapping in $\R^k$, see 
\cite{Minty}. Hence, the function $J_{\lambda T}\eqdef(\mbox{Id}+\lambda T)^{-1}$ is single-valued and is called the \emph{resolvent} mapping of $T$ of \emph{proximal parameter} $\lambda$. 
The function $J_{\lambda T}:\R^k\to\R^k$ is non-expansive and $J_{\lambda T}(x)=x$ if and only if $0\in T(x)$. Therefore, the inclusion problem \eqref{eq:laranja} can be interpreted as a fixed point problem for $J_{\lambda T}$.
This is the motivation for the proximal point algorithm, which generates a sequence $(x_n)$ by the rule
\begin{equation*}
x_{n+1} = J_{\lambda T}(x_n),\footnote{Actually, the method in \cite{Rockafellar} allows the parameter $\lambda$ to vary with $n$.}
\end{equation*}
starting from a given point $x_0\in\R^k$. Furthermore, in the iteration above the resolvent can be replaced by a suitable affine combination of $J_{\lambda T}$ and the identity map. 
 This method is known as the \emph{generalized proximal point algorithm}
\cite{EcksteinD}. 

Inspired by this last method, we propose the following \emph{randomized proximal point algorithm} for solving problem \eqref{eq:out1}: 
\begin{equation}
\label{eq:ran_ppa}
X_{n+1} = X_n + \alpha(J_{\lambda T_{\xi_n}}(X_n)-X_n),
\end{equation}
starting from a point $X_0=x_0\in\R^k$, where $\lambda>0$ and $\alpha\in(0,2)$ are fixed, and $(\xi_n)$ is an ergodic stationary sequence of random variables. 

The following result gives the convergence of iteration \eqref{eq:ran_ppa}. Denote by $Z$ the set of all points $x^{*}$ satisfying relation \eqref{eq:out1}.
\begin{mtheorem}
\label{th:conv_ran_prox}
 Suppose that $Z$ is non-empty. Then, for every initial point $X_{0}=x_0\in\R^k$, the sequence $(X_n)$ of random variables generated by \eqref{eq:ran_ppa} converges almost surely to a random variable $X\in Z$.
\end{mtheorem}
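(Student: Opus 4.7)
The plan is to realize the iteration \eqref{eq:ran_ppa} as a particular instance of the general random iteration \eqref{alg:1} by defining
\[
f_i(x)\eqdef (1-\alpha)x+\alpha J_{\lambda T_i}(x),\qquad i\in I,
\]
so that \eqref{eq:ran_ppa} becomes $X_{n+1}=f_{\xi_n}(X_n)$. The fixed-point characterization of the resolvent (namely $J_{\lambda T_i}(x)=x$ iff $0\in T_i(x)$) implies $f_i(x)=x$ iff $0\in T_i(x)$, so the common fixed-point set $C$ attached to the family $\{f_i\}$ in \eqref{eq:sto_feas} coincides with $Z$. In particular, the nonemptiness hypothesis of Theorem \ref{th:one} is exactly the nonemptiness of $Z$ that we are assuming.

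The substantive step is to verify that each $f_i$ is a paracontraction. Since $T_i$ is maximal monotone on $\R^k$, the resolvent $J_{\lambda T_i}$ is firmly non-expansive, i.e.\ $\tfrac12$-averaged, and can be written as
\[
J_{\lambda T_i}=\tfrac{1}{2}\mbox{Id}+\tfrac{1}{2}R_i,\qquad R_i\eqdef 2J_{\lambda T_i}-\mbox{Id},
\]
with $R_i\colon \R^k\to\R^k$ non-expansive. Substituting this into the definition of $f_i$ gives the one-line identity
\[
f_i=\Bigl(1-\frac{\alpha}{2}\Bigr)\mbox{Id}+\frac{\alpha}{2}\,R_i,
\]
and, because $\alpha\in(0,2)$ forces $\alpha/2\in(0,1)$, the map $f_i$ is $(\alpha/2)$-averaged in the sense of the definition recalled after Theorem \ref{th:one}. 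Averaged mappings on $\R^k$ are paracontractions (this is precisely the content quoted from Lemma \ref{lm:resolv}), so each $f_i\colon\R^k\to\R^k$ is a continuous paracontraction.

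With $M=\R^k$ separable and satisfying the Heine--Borel property, $\{f_i\}$ a family of paracontractions, and $C=Z\neq\emptyset$, all hypotheses of Theorem \ref{th:one} are met, once one notes that measurability of $(i,x)\mapsto f_i(x)$ reduces to that of $(i,x)\mapsto J_{\lambda T_i}(x)$, which is part of the standing setup for the family $\{T_i\}$. Applying Theorem \ref{th:one} to the iteration $X_{n+1}=f_{\xi_n}(X_n)$ gives, for every deterministic $X_0=x_0$, almost-sure convergence of $(X_n)$ to a random variable taking values in $C=Z$, which is the desired statement. I do not anticipate any genuine obstacle: the only nontrivial ingredient is the rewriting of $f_i$ as a relaxation of a non-expansive reflection, and this is a two-line calculation given the firm non-expansiveness of the resolvent.
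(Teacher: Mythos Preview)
Your proposal is correct and follows the same strategy as the paper: rewrite \eqref{eq:ran_ppa} as an instance of \eqref{alg:1}, identify the fixed-point set $C$ with $Z$ via the resolvent characterization, show each $f_i$ is $(\alpha/2)$-averaged and hence a paracontraction, and then invoke Theorem~\ref{th:one}. The only difference is in how the averaged property is established: you use the reflection identity $J_{\lambda T_i}=\tfrac12\,\mbox{Id}+\tfrac12 R_i$ with $R_i$ nonexpansive, whereas the paper's Lemma~\ref{lm:resolv} computes the same inequality directly via the Yosida approximation and monotonicity---both are standard and give the same conclusion.
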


In 
\cite{Bianchi} is analyzed (in the case of Hilbert spaces) the following similar random iteration
\begin{equation*}
X_{n+1}=J_{\lambda_n T_{\xi_n}}(X_n),
\end{equation*}
starting from an arbitrary initial point $x_0$ and assuming that $(\xi_n)$ is an i.i.d. sequence. 
Besides the fact we consider a non-independent sequence $(\xi_{n})$ in the design of the randomized proximal point algorithm, we also observe that the problem studied and the convergence results obtained in 
\cite{Bianchi} are different from those we consider here. Indeed, the author proves that almost surely the sequence $(X_n)$ converges in average to a point within the set of zeroes of the \emph{mean operator} determined by the Aumann integral of the operators, provided that $(\lambda_n)\in\ell_2\backslash\ell_1$ and the mean operator is maximal monotone.



\section{Proofs}
\label{sc:proofs}
This section is devoted to proving Theorems \ref{paracontraction}, \ref{projectionstationary} and \ref{th:conv_ran_prox}. 
Before presenting the proofs, we introduce some preliminary notions and results.

\subsection{ The space of random measures and the narrow topology}
\label{adv}

Let $(\Omega,\mathscr{F},\mathbb{P})$ be a probability space and assume in this section that $(M,d)$ a compact metric space. A probability measure $\mu$ on $\Omega\times M$ such that the first marginal of $\mu$ is $\mathbb{P}$ (i.e., $\mathbb{P}(A)=\mu(A\times M)$ for every measurable set $A$) is called  a \emph{random measure}. The space of random measures is denoted by
$\mathcal{P}_{\mathbb{P}}(\Omega\times M)$.

Following Crauel \cite{Crauel}, we consider the \emph{narrow topology} on the space of random measures, which we briefly recall now.  A \emph{random continuous function} is a map $f\colon \Omega\times M\to \mathbb{R}$ such that
\begin{itemize}
\item[(i)] for $\mathbb{P}$-almost every $\omega$, the map $x\mapsto f(\omega,x)$ is continuous; 
\item[(ii)] for every $x\in M$, the map $\omega\to f(\omega,x)$ is measurable; 
\item[(iii)] the map $\omega\mapsto \sup\{|f(\omega,x)|\colon x\in M\}$ is integrable with respect to $\mathbb{P}$. 
\end{itemize}
 
It follows from items (i)-(iii) that every random continuous function is measurable and integrable with respect to any random measure $\mu$. The smallest topology on $ \mathcal{P}_{\mathbb{P}}(\Omega\times M)$ such that the map $\mu\mapsto \mu(f)$ is continuous for every random continuous function $f$ is called the narrow topology. A sequence $\mu_{n}$ converges to $\mu$ in the narrow topology if and only if 
\begin{equation}
\label{eq:pro narrow}
\int f\, d\mu_{n}\to \int f\, d\mu
\end{equation}
for every random continuous function $f$. The space $\mathcal{P}_{\mathbb{P}}(\Omega\times M)$ endowed with the narrow topology is sequentially compact,  see Crauel \cite{Crauel}.

A random iteration of maps $\varphi$ on $M$ (recall the definition in Section \ref{ssc:ripm}) induces a map $F\colon \Omega\times M\to \Omega\times M$ defined by 
\begin{equation*}
F(\omega,x)=(\theta(\omega), f_{\omega}(x)),
\end{equation*}
which is called the \emph{skew product} induced by $\varphi$.

\begin{definition}[$\varphi$-invariant measures]
\emph{
A probability measure $\mu$ on $\Omega\times M$ is called a $\varphi$\emph{-invariant measure} if
\begin{enumerate}
\item $\mu \in  \mathcal{P}_{\mathbb{P}}(\Omega\times M)$; 
\item $\mu$ is a $F$-invariant measure.
\end{enumerate}}

\end{definition}

We observe  that the space of random measures is invariant under the action of $F$. In particular, it holds that $F^{i}_{*}\mu\in \mathcal{P}_{\mathbb{P}}(\Omega\times M)$ for every $i\geq 0$ and $\mu\in\mathcal{P}_{\mathbb{P}}(\Omega\times M)$, where $F^{i}_{*}\mu$ is the stochastic image of $\mu$ by $F^i$. 
Next, we present the following version of Krylov-Bogoliubov theorem for the narrow topology. 

\begin{proposition}[\cite{Arnold}, Theorem $1.5.8$]
\label{pr:krylov}
For every probability measure $\mu\in  \mathcal{P}_{\mathbb{P}}(\Omega\times M)$, any accumulation point of the sequence 
$$
\frac{1}{n}\sum_{i=0}^{n-1}F^{i}_{*}\mu
$$
is a $\varphi$-invariant measure.
\end{proposition}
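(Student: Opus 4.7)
The plan is to adapt the classical Krylov--Bogoliubov argument to the narrow topology. Write $\mu_{n}\eqdef \frac{1}{n}\sum_{i=0}^{n-1}F^{i}_{*}\mu$ and let $\mu^{*}$ be an accumulation point, so that $\mu_{n_{k}}\to \mu^{*}$ narrowly along some subsequence (such accumulation points exist by sequential compactness of $\mathcal{P}_{\mathbb{P}}(\Omega\times M)$). Two things must be verified: that $\mu^{*}$ is a random measure, and that $\mu^{*}$ is $F$-invariant.

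For the marginal condition, I would first observe that since $\theta_{*}\mathbb{P}=\mathbb{P}$, each $F^{i}_{*}\mu$ lies in $\mathcal{P}_{\mathbb{P}}(\Omega\times M)$ (as already noted in the excerpt), and hence so does each Cesàro average $\mu_{n}$. To transfer this to $\mu^{*}$, note that for every bounded measurable $g\colon\Omega\to\mathbb{R}$ the map $(\omega,x)\mapsto g(\omega)$ trivially satisfies conditions (i)--(iii) of the definition of a random continuous function. Testing against it via \eqref{eq:pro narrow} gives
$$
\int g(\omega)\, d\mu^{*}(\omega,x) \;=\; \lim_{k\to\infty}\int g(\omega)\, d\mu_{n_{k}}(\omega,x) \;=\; \int g\, d\mathbb{P},
$$
so the first marginal of $\mu^{*}$ is $\mathbb{P}$.

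For invariance, the key auxiliary statement is that $f\circ F$ is a random continuous function whenever $f$ is. Measurability of $\omega\mapsto f(\theta(\omega),f_{\omega}(x))$ is immediate from the joint measurability of $F$ and $f$; integrability follows from the pointwise bound $\sup_{x}|f(\theta(\omega),f_{\omega}(x))|\leq \sup_{y}|f(\theta(\omega),y)|$ combined with $\theta_{*}\mathbb{P}=\mathbb{P}$; and continuity in $x$ follows from continuity of $f_{\omega}$ composed with continuity of $y\mapsto f(\theta(\omega),y)$, valid for $\omega$ in the $\mathbb{P}$-full set $\theta^{-1}(\Omega_{0})$, where $\Omega_{0}$ is the full-measure set on which $x\mapsto f(\omega',x)$ is continuous. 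Once this is in place, the telescoping identity
$$
\int (f\circ F)\, d\mu_{n} \,-\, \int f\, d\mu_{n} \;=\; \frac{1}{n}\Bigl(\int f\, d(F^{n}_{*}\mu)\,-\,\int f\, d\mu\Bigr)
$$
holds, and its right-hand side is bounded in absolute value by $\frac{2}{n}\int\sup_{x}|f(\omega,x)|\, d\mathbb{P}(\omega)\to 0$. Passing to the limit along $n_{k}$ with narrow convergence applied to both $f$ and $f\circ F$ yields $\int f\, d(F_{*}\mu^{*})=\int f\, d\mu^{*}$, and since random continuous functions separate random measures, $F_{*}\mu^{*}=\mu^{*}$.

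The main obstacle I anticipate is exactly the closure of the test class under the skew product, i.e., the verification that $f\circ F$ is random continuous; this is what licenses the telescoping trick, and it relies on $\theta$-invariance of $\mathbb{P}$ in a slightly delicate way to simultaneously guarantee the a.e.\ continuity in $x$ and the integrability of $\sup_{x}|f\circ F|$. Once that closure is settled, the rest is the standard Cesàro averaging mechanism.
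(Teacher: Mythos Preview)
The paper does not supply its own proof of this proposition; it is quoted without argument as Theorem~1.5.8 of Arnold's monograph. Your proposal is a correct self-contained proof, and it is precisely the standard Krylov--Bogoliubov mechanism transported to the narrow topology, which is also how Arnold proceeds.

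Two minor remarks. First, since you invoke sequential compactness of $\mathcal{P}_{\mathbb{P}}(\Omega\times M)$ from Crauel to produce the accumulation point $\mu^{*}$, that point already lies in $\mathcal{P}_{\mathbb{P}}(\Omega\times M)$ by definition; your separate verification of the marginal condition is therefore redundant, though harmless and instructive. Second, the step you flag as the main obstacle---closure of the class of random continuous functions under precomposition with $F$---is indeed the only nontrivial point, and your treatment of it is correct: $\theta$-invariance of $\mathbb{P}$ is exactly what transfers both the $\mathbb{P}$-a.e.\ continuity in $x$ (via $\mathbb{P}(\theta^{-1}(\Omega_{0}))=1$) and the integrability of $\omega\mapsto\sup_{x}|f(\theta(\omega),f_{\omega}(x))|$ from $f$ to $f\circ F$. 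The concluding appeal to the fact that random continuous functions separate random measures is legitimate in this setting (functions of the form $(\omega,x)\mapsto \mathds{1}_{A}(\omega)g(x)$ with $A\in\mathscr{F}$ and $g\in C_{b}(M)$ are random continuous and already suffice to determine a random measure via its disintegration).
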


\subsection{Recurrence Poincar\' e theorem for random iterations of maps} 
\label{Poincare}

Let $(Y, \mathscr{Y}, \eta)$ be a probability space and consider a measurable transformation $G\colon Y \to Y$ preserving the probability measure $\eta$. The classical recurrence Poincar\' e theorem says that, for every measurable set $A$, there is a subset $A_{r}\subset A$ (which may be empty if $\eta(A)=0$) with $\eta(A_{r})=\eta(A)$  such that, for every $y\in A_{r}$, we have 
$$
G^{n}(y)\in A \quad \mbox{for infinitely many}\,\, n\geq 1.
$$

A corollary of this result is the \emph{metric} recurrence Poincar\' e theorem. Namely, if the space $Y$ is a second-countable topological space, then the set of \emph{recurrent points} has $\eta$-full measure. We recall that a point $y\in Y$ is called recurrent if, for every neighborhood $V$ of $y$, there is a positive integer $n$ such that $G^{n}(y)\in V$. 

In the proof of Theorem \ref{paracontraction}, we will need the following recurrence property for random iterations.

\begin{proposition}\label{recurrentfversion}
Let $\varphi$ be a random iteration of maps on $M$. Denote by $R_{\varphi}$ the set of points $(\omega,x)$ such that, for every open neighborhood $V$ of $x$, 
$$
\varphi(n,\omega,x)\in V \quad \mbox{for infinitely many}\, \,  n.
$$   
Then, if $\mu$ is a $\varphi$-invariant measure, we have $\mu(R_{\varphi})=1$.
\end{proposition}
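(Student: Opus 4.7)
The plan is to reduce the statement to the classical (measure-theoretic) Poincar\'e recurrence theorem applied to the skew product $F\colon\Omega\times M\to\Omega\times M$, $F(\omega,x)=(\theta(\omega),f_\omega(x))$. The key observation is that $\varphi(n,\omega,x)$ is exactly the $M$-component of $F^n(\omega,x)$, so recurrence of orbits of $F$ in sets of the form $\Omega\times V$ translates directly into recurrence of $\varphi(\cdot,\omega,x)$ in $V$. Since $\mu$ is $\varphi$-invariant, by definition it is $F$-invariant, so $F$ preserves the probability measure $\mu$ on $\Omega\times M$ and the classical Poincar\'e recurrence theorem applies.

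The main technical point I would be careful about is that $\Omega$ is only a measurable space and carries no topology, so one cannot simply invoke the metric Poincar\'e recurrence theorem on the product $\Omega\times M$ to get a recurrent point in the usual topological sense. Instead, I would exploit the fact that $M$ is separable, hence second-countable, and fix a countable base $\{V_k\}_{k\in\mathbb{N}}$ of open sets of $M$. For each $k$, set $A_k\eqdef\Omega\times V_k$; the classical Poincar\'e recurrence theorem applied to the $F$-invariant measurable set $A_k$ yields a measurable subset $A_k^{r}\subset A_k$ with $\mu(A_k^{r})=\mu(A_k)$ such that every $(\omega,x)\in A_k^{r}$ satisfies $F^n(\omega,x)\in A_k$ for infinitely many $n$, equivalently $\varphi(n,\omega,x)\in V_k$ for infinitely many $n$. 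Let $N_k\eqdef A_k\setminus A_k^{r}$, a $\mu$-null set, and set $N\eqdef\bigcup_k N_k$, which is still $\mu$-null.

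Finally I would verify that $(\Omega\times M)\setminus R_\varphi\subset N$. Indeed, if $(\omega,x)\notin R_\varphi$, then there is some open neighborhood $V$ of $x$ in $M$ such that $\varphi(n,\omega,x)\in V$ for only finitely many $n$; choose an index $k$ with $x\in V_k\subset V$ (possible because $\{V_k\}$ is a base). Then $(\omega,x)\in A_k$, but $\varphi(n,\omega,x)\in V_k$ for only finitely many $n$, so $(\omega,x)\notin A_k^{r}$, whence $(\omega,x)\in N_k\subset N$. Therefore $\mu\bigl((\Omega\times M)\setminus R_\varphi\bigr)\leq\mu(N)=0$, which is the claim. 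The only remaining bookkeeping is to check measurability of $R_\varphi$, which follows because $R_\varphi=\bigcap_k\bigl((\Omega\times M)\setminus A_k)\cup A_k^{r}\bigr)$ is a countable intersection of measurable sets.
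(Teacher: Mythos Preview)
Your proposal is correct and follows essentially the same argument as the paper: both fix a countable base $\{V_k\}$ for $M$, apply the classical Poincar\'e recurrence theorem to each cylinder $\Omega\times V_k$ for the $F$-invariant measure $\mu$, and take the union of the resulting null sets. Your write-up is in fact slightly more detailed, as you spell out the inclusion $(\Omega\times M)\setminus R_\varphi\subset N$ and the measurability of $R_\varphi$, which the paper leaves implicit.
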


The proposition above could be obtained from the  metric recurrence Poincar\' e theorem applied to the skew product induced by $\varphi$, if the space $\Omega$ was a second-countable topological space. However, we are not assuming that $\Omega$ is a topological space.

\begin{proof}
Since $M$ is a separable metric space, it has a countable basis $(V_{k})_{k\in\N}$ of open sets. For each $k\in\N$, by the recurrence Poincar\' e theorem applied to the skew product $F\colon \Omega\times M\to \Omega\times M$ induced by $\varphi$ and the $F$-invariant measure $\mu$, we have that there is a subset $H_{k}\subset \Omega \times V_{k}$ such that $\mu(H_{k})=0$, and for every $(\omega,x)\in \Omega\times V_{k}\setminus  H_{k}$ the sequence $F^{n}(\omega,x)$ returns infinitely many times to $\Omega\times V_{k}$. Hence, if we take 
$$
H=\bigcup_{k=1}^{\infty} H_{k},
$$
we have that $\mu(H)=0$ and every point $(\omega,x)\in \Omega\times M\setminus H$ is in $R_{\varphi}$. 
\end{proof}

\begin{remark}
\emph{It is clear that Proposition \ref{recurrentfversion} can be proved under weaker assumptions. However, for the sake of clarity, we have preferred to present it for the context we are considering in Theorem \ref{paracontraction}.}
\end{remark}

\subsection{Proof of Theorem \ref{paracontraction}} 

Let $\varphi$ be a random iteration of paracontractions maps on $M$ and $F$ the skew product induced by $\varphi$. Recall the definition of $C_{\varphi}$ in Section \ref{ssc:ripm}. Throughout, for simplicity, we use the notation 
$$f_\omega^n(x)\eqdef\varphi(n,\omega,x).$$ 
We start by presenting the following lemmas.

\begin{lemma}\label{lm:withoutcontinuity}
Let $\{U_{k}\}_{k\in \mathbb{N}}$ be a countable basis for the topology on $M$ and denote by
 $A_{U_{k}}$ the measurable subset (possibly empty) of $\Omega$ such that, for every $(\omega,x)\in A_{U_{k}}\times U_{k}$, we have $f_{\omega}(x)\neq x$.
Then, for every $x\in M\setminus C_{\varphi}$, there is $k$ such that $x\in U_{k}$ and 
$\mathbb{P}(A_{U_{k}})>0$.
\end{lemma}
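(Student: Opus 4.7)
Let $x \in M \setminus C_\varphi$. By definition of $C_\varphi$, the set $B \eqdef \{\omega \in \Omega : f_\omega(x) \neq x\}$ has $\mathbb{P}(B) > 0$. The plan is to use the paracontraction hypothesis to isolate $x$ from the fixed point set of $f_\omega$ for every $\omega \in B$, and then pass to a basis element containing $x$.

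The main step will be the geometric claim that for each $\omega \in B$ there is an open neighborhood $V_\omega$ of $x$ containing no fixed point of $f_\omega$. I would prove this by contradiction: if some sequence $(y_n)$ of fixed points of $f_\omega$ satisfied $y_n \to x$, the paracontraction inequality applied with the fixed point $y_n$ and the non-fixed point $x$ would give $d(f_\omega(x), y_n) < d(x, y_n) \to 0$, so $y_n \to f_\omega(x)$; combined with $y_n \to x$ this would force $f_\omega(x) = x$, contradicting $\omega \in B$.

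With the claim in hand, the rest is straightforward. Since $\{U_k\}_{k\in\mathbb{N}}$ is a basis for the topology of $M$, the countable subfamily $\{U_k : x \in U_k\}$ is a neighborhood base at $x$, so for each $\omega \in B$ there is some $k(\omega)$ with $x \in U_{k(\omega)} \subset V_\omega$, and hence $\omega \in A_{U_{k(\omega)}}$. Therefore $B \subset \bigcup_{k \colon x \in U_k} A_{U_k}$, and countable subadditivity together with $\mathbb{P}(B) > 0$ forces $\mathbb{P}(A_{U_k}) > 0$ for some $k$ with $x \in U_k$.

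The only genuine obstacle is the geometric step, where the paracontraction property is used to rule out the accumulation of fixed points at a non-fixed point; without this, the set of $\omega$ for which $x$ could be approximated by fixed points might still have positive mass, and the basis argument would break down. The remainder is routine bookkeeping with the countable basis, and the measurability of the sets $A_{U_k}$ is granted by the lemma's hypothesis.
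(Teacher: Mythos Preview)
Your proof is correct and follows essentially the same route as the paper: for each $\omega$ with $f_\omega(x)\neq x$, find a neighborhood of $x$ free of fixed points of $f_\omega$, pass to a basis element $U_k\ni x$ contained in it (so $\omega\in A_{U_k}$), and conclude by countable subadditivity. The only difference is that you establish the geometric claim via the paracontraction inequality, whereas the paper simply observes that $\{y: f_\omega(y)\neq y\}$ is open by continuity of $f_\omega$; your argument works, but continuity alone already suffices here.
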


\begin{proof}
By the definition of $C_{\varphi}$, for every $x\in M\setminus C_{\varphi}$ must exist a measurable subset $A_{x}\subset\Omega$ with $\mathbb{P}(A_{x})>0$ such that, for every $\omega\in A_{x}$, it holds that
$f_{\omega}(x)\neq x$. For every $\omega\in A_{x}$,  
the set   
$$
U(\omega)\eqdef\{y\in M\colon f_{\omega}(y)\neq y \}
$$
contains $x$ and, by the continuity of $f_{\omega}$, it is open.
Therefore, we have 
$$
A_{x}=\displaystyle \bigcup_{\ell=1}^{\infty} \{\omega\in A_{x}\colon B\left( x,\frac{1}{\ell}\right)\subset U(\omega)\}.
$$
In addition, since $\mathbb{P}(A_{x})>0$, there exists $\ell$ such that 
$$
\mathbb{P}
(\{\omega\in A_{x}\colon B\left( x,\frac{1}{\ell}\right)\subset U(\omega)\})>0.
$$
Now, by the assumption that $\{U_{k}\}_{k\in \mathbb{N}}$ is a basis for the topology of $M$, there is $U_{k}\ni x$ such that $U_{k}\subset B\left( x,\frac{1}{\ell}\right)$. Then, 
$$
\{\omega\in A_{x}\colon B\left( x,\frac{1}{\ell}\right)\subset U(\omega)\}\subset A_{U_{k}},
$$
which implies that $\mathbb{P}(A_{U_{k}})>0$.
\end{proof}

\begin{lemma}\label{recurrence}
 Every  $\varphi$-invariant measure is supported on $\Omega\times C_{\varphi}$.  
\end{lemma}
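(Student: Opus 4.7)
The plan is twofold. First, I combine the paracontraction property with the recurrence supplied by Proposition \ref{recurrentfversion} to show that for $\mu$-almost every $(\omega,x)$ the whole trajectory $(f_\omega^n(x))_{n\geq 0}$ is identically equal to $x$. Second, assuming toward a contradiction that $\mu(\Omega\times(M\setminus C_\varphi))>0$, I use Lemma \ref{lm:withoutcontinuity} and the ergodicity of $\theta$ to contradict the first step.

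For the first step, I would fix an auxiliary point $p\in C_\varphi$ and set $\Omega_0=\{\omega:f_\omega(p)=p\}$, which has $\mathbb{P}$-measure one; by $\theta$-invariance of $\mathbb{P}$, the set $\tilde\Omega_0=\bigcap_{k\geq 0}\theta^{-k}(\Omega_0)$ is also of full measure. For $\omega\in\tilde\Omega_0$, each map $f_{\theta^k(\omega)}$ has $p$ as a fixed point, so the paracontraction property forces the sequence $n\mapsto d(f_\omega^n(x),p)$ to be non-increasing, with equality at step $k$ if and only if $f_\omega^k(x)$ is itself a fixed point of $f_{\theta^k(\omega)}$. By Proposition \ref{recurrentfversion} the set $R_\varphi\cap(\tilde\Omega_0\times M)$ has full $\mu$-measure, and for any $(\omega,x)$ in it the recurrence property provides a subsequence along which $f_\omega^n(x)\to x$, and hence $d(f_\omega^n(x),p)\to d(x,p)$. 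Combined with the non-increasing property, this forces $d(f_\omega^n(x),p)=d(x,p)$ for every $n$, so equality holds in the paracontraction at each step: $f_\omega^{k+1}(x)=f_\omega^k(x)$ for all $k\geq 0$, i.e., $f_\omega^n(x)=x$ for all $n\geq 0$. Equivalently, $f_{\theta^n(\omega)}(x)=x$ for every $n\geq 0$.

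For the second step, suppose for contradiction that $\mu(\Omega\times(M\setminus C_\varphi))>0$. By Lemma \ref{lm:withoutcontinuity}, $M\setminus C_\varphi$ is contained in the countable union of those basis elements $U_k$ with $\mathbb{P}(A_{U_k})>0$, so countable subadditivity yields one such basis element $U$ satisfying both $\mathbb{P}(A_U)>0$ and $\mu(\Omega\times U)>0$. Since $\mathbb{P}(A_U)>0$ and $\theta$ is ergodic, the set $N_U=\{\omega:\theta^n(\omega)\notin A_U\text{ for every }n\geq 0\}$ has $\mathbb{P}(N_U)=0$, so $\mu(N_U\times M)=0$. On the other hand, for any $(\omega,x)$ in the full-$\mu$-measure set established in step (i) with $x\in U$, the conclusion $f_{\theta^n(\omega)}(x)=x$ for all $n$, together with the definition of $A_U$ (which precludes $f_{\omega'}(x)=x$ whenever $\omega'\in A_U$ and $x\in U$), forces $\theta^n(\omega)\notin A_U$ for every $n\geq 0$. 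The intersection of this full-measure set with $\Omega\times U$ is therefore contained in $N_U\times M$, which contradicts $\mu(\Omega\times U)>0=\mu(N_U\times M)$.

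I expect the trickiest aspect to be step (i), specifically the clean interplay between the extraction of a recurrent subsequence, the forced equality throughout a non-increasing sequence, and the iterative application of the equality case of paracontraction to deduce constancy of the whole orbit. Step (ii) is then essentially a bookkeeping argument that uses Lemma \ref{lm:withoutcontinuity} to localize the mass outside $C_\varphi$ and uses the ergodicity of $\theta$ to obstruct the forward orbit of $\omega$ from avoiding $A_U$ forever.
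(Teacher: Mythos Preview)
Your proof is correct and follows essentially the same approach as the paper: both combine the non-increasing distance to a point of $C_\varphi$ (from the paracontraction property) with the recurrence furnished by Proposition~\ref{recurrentfversion}, and both invoke Lemma~\ref{lm:withoutcontinuity} together with ergodicity of $\theta$ to force the contradiction. Your organization is slightly cleaner---you first extract the global statement that $\mu$-a.e.\ orbit is constant and then contradict it at the measure level, whereas the paper works with a single well-chosen point $(\bar\omega,\bar x)$ and derives the contradiction directly from its trajectory---but the underlying mechanism is identical.
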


\begin{proof}
Let $\mu$ be a $\varphi$-invariant measure. We need to show that $\mu(\Omega\times C_\varphi)=1$.
 If $C_{\varphi}=M$, then there is nothing to do. So we assume that $M\setminus C_{\varphi}$ is non-empty.

The Birkhoff's ergodic theorem implies that there exists a measurable set $\Lambda^{*}$ of $\mathbb{P}$-full measure such that for every $k$ with $\mathbb{P}(A_{U_{k}})>0$, we have 
$$
\theta^{n}(\omega)\in A_{U_{k}}\quad \mbox{for infinitely many}\, \,  n,
$$
 for every $\omega\in \Lambda^{*}$.
 
 Now, we observe that since $M$ is separable, there is a set $\Lambda\subset \Omega$ of $\mathbb{P}$-full measure such that
$$
f_{\omega}(c)=c 
$$
for every $c\in C_\varphi$ and every $\omega\in \Lambda$.  Taking $\Lambda'\eqdef\displaystyle\bigcap_{n=0}^{\infty} \theta^{-n}(\Lambda)$, by the definition of paracontractions, we have 
\begin{equation}\label{decrease}
d(f^{n+1}_{\omega}(x),c)\leq d(f_{\omega}^{n}(x),c)
\end{equation}
for every $c\in C_\varphi$, $(\omega,x)\in \Lambda'\times M$ and $n\in\N$.
Further, from the $\theta$-invariance of $\mathbb{P}$, it follows that $\mathbb{P}(\Lambda')=1$.

Next, we take an open set $V$ such that $V\cap C_\varphi=\emptyset$, which exists because $C_\varphi$ is closed. We claim that $\mu(\Omega\times V)=0$. Indeed, assume by contradiction that $\mu(\Omega\times V)>0$. 
Note that  
\begin{equation}
\label{eq:out2}
\mu(\Lambda^{*}\cap \Lambda'\times M)=\mathbb{P}(\Lambda^{*}\cap \Lambda')=1,
\end{equation}
where in the first equality above we use that the first marginal of $\mu$ is $\mathbb{P}$.
In particular, it follows from Proposition \ref{recurrentfversion} and equation \eqref{eq:out2} that $\mu(R_{\varphi}\cap (\Lambda'\cap \Lambda^*\times M))=1$. Since $\mu(\Omega\times V)>0$, we can find a point $(\bar \omega,\bar x)\in\Omega\times V$ such that 
$$
(\bar \omega,\bar x)\in R_{\varphi}\cap (\Lambda'\cap \Lambda^*\times V).
$$

Now, we fix $c\in C_\varphi$ and observe that $\bar \omega\in \Lambda^{*}\cap \Lambda'$ and $\bar x\notin C_{\varphi}$. By the definition of $\Lambda^{*}$ and Lemma \ref{lm:withoutcontinuity}, 
there is $k$ such that $\bar x$  is not a fixed point for the map $f_{\theta^{k}(\bar \omega)}$. Hence, since  $\bar \omega\in \Lambda'$, it follows from the definitions of $\Lambda'$ and paracontractions that 
$$
d(f^{k+1}_{\bar \omega}(\bar x),c)< d(\bar x,c),
$$
for every $n\geq k$.

Define $r\eqdef d(f^{k+1}_{\bar \omega}(\bar x),c)\geq0$ and denote by $B[c,r]$ the closed ball of radius $r$ centered at $c$. By inequality \eqref{decrease}, it follows that
$$  
d(f^{n}_{\bar \omega}(\bar x),c)\leq d(f^{k+1}_{\bar\omega}(\bar x),c)
$$
for every $n\geq k+1$, which implies 
\begin{equation}
\label{eq:tomara}
f_{\bar\omega}^{n}(\bar x)\in B[c,r]
\end{equation}
for every $n\geq k+1$. 

On the other hand, $r<  d(\bar x ,c)$ implies that there is an open neighborhood $U$ of $\bar x$, such that $U\cap B[c,r]=\emptyset$. However, the definition of $R_\varphi$ yields an integer $m\geq k+1$ such that 
$$
f_{\bar\omega}^m(\bar x)\in U,
$$
which is a contradiction with \eqref{eq:tomara}. Hence, we conclude that $\mu(\Omega\times V)=0$.

Finally, to deduce that $\mu(\Omega\times C_\varphi)=1$, we observe that the set $\Omega\times (M\setminus C_\varphi)$ is an enumerable union of sets of the form $\Omega\times V$ with $V$ open and $V\cap C_\varphi=\emptyset$. 
\end{proof}

\begin{proof}[Proof of Theorem \ref{paracontraction}]
We first assume that $M$ is a compact metric space. For every $x\in M$, we will show that 
\begin{equation}\label{dodoi3}
\lim_{n\to \infty}d(f_{\omega}^{n}(x),C_{\varphi})=0 \quad \mbox{for} \,\, \mathbb{P}\mbox{-almost every}\,\,\omega.
\end{equation}
To this end, recall the definition of $\Lambda'$ in the proof of Lemma \ref{recurrence} and that $\mathbb{P}(\Lambda')=1$. 
By equation \eqref{decrease}, we have that, for every $\omega \in \Lambda'$,
\begin{equation}\label{dodoi2}
d(f_{\omega}^{n+1}(x),C_\varphi)=\inf_{c\in C_{\varphi}}d(f_{\omega}^{n+1}(x),c)\leq \inf_{c\in C_{\varphi}}d(f_{\omega}^{n}(x),c)=d(f_{\omega}^{n}(x),C_{\varphi}),
\end{equation} 
 which means that the sequence $d(f_{\omega}^{n}(x),C_\varphi)$ is decreasing for $\mathbb{P}$-almost every $\omega$. Therefore, there is a measurable map $H\colon \Omega\to [0,\infty)$ such that   
$$
\lim_{n\to \infty}d(f_{\omega}^{n}(x),C_\varphi)=H(\omega)
$$
for $\mathbb{P}$-almost every $\omega$, and by the Dominated Convergence Theorem we have 
\begin{equation}\label{dodoi}
\lim_{n\to \infty} \int d(f_{\omega}^{n}(x),C_\varphi)\, d\mathbb{P}(\omega)=\int H(\omega)\, d\mathbb{P}(\omega).
\end{equation}

We now prove that the sequence on the left-hand side of the equation above has a subsequence converging to $0$, which will imply that $H(\omega)=0$ for $\mathbb{P}$-almost every $\omega$.
For this purpose, consider the probability measure $\mathbb{P}\times \delta_{x}\in \mathcal{P}_{\mathbb{P}}(\Omega\times M)$. 
Since $M$ is compact, the space $ \mathcal{P}_{\mathbb{P}}(\Omega\times M)$ is sequentially compact and, as a consequence, the sequence 
$$
\frac{1}{n}\sum_{i=0}^{n-1}F^{i}_{*}(\mathbb{P}\times \delta_{x})
$$
has an accumulation point $\mu$, which by Proposition \ref{pr:krylov} must be a $\varphi$-invariant measure.  Let $(n_{k})_{k\in\N}$ be a subsequence such that 
\begin{equation}\label{narrow}
\frac{1}{n_{k}}\sum_{i=0}^{n_{k}-1}F^{i}_{*}(\mathbb{P}\times \delta_{x})\to \mu
\end{equation}
in the narrow topology. Also, define the map $g\colon\Omega\times M\to \mathbb{R}$ by 
$$
g(\omega,y)=d(y,C_\varphi),
$$
which clearly is a random continuous map. From equation \eqref{narrow} and the property \eqref{eq:pro narrow} of the narrow topology applied to the map $g$, it follows
\begin{equation}
\label{eq:integral}
\lim_{k\to \infty} \int g\, d\left(\frac{1}{n_k}\sum_{i=0}^{n_k-1}F^{i}_{*}(\mathbb{P}\times \delta_{x})\right)= \int g\, d\mu.
\end{equation}

Now, let us look more closely at the integrals on both sides of the equation above.
First, we claim that $\int g\, d\mu=0$. Indeed, by Lemma \ref{recurrence} we have that 
$\mu( \Omega\times C_\varphi
)=1$ and, therefore,
\[
\begin{split}
\int g\, d\mu &=\int_{\Omega\times C_\varphi
}d(y,C_{\varphi})\, d\mu(\omega,y)=0.
\end{split}
\]
Next, for the integral on the left-hand side of \eqref{eq:integral}, we observe that 
\begin{equation*}
\int g\, d\left(\frac{1}{n_k}\sum_{i=0}^{n_k-1}F^{i}_{*}(\mathbb{P}\times \delta_{x})\right) = \dfrac{1}{n_k}\sum_{i=0}^{n_k-1}\int g\, d F^{i}_{*}(\mathbb{P}\times \delta_{x}),
\end{equation*}
and 
\[
\begin{split}
\int g\, d F^{i}_{*}(\mathbb{P}\times \delta_{x})&=\int g\circ F^{i}\, d(\mathbb{P}\times \delta_{x})\\
&=\int \int g(F^{i}(\omega,y))\, d\delta_{x}(y) \,d\mathbb{P}(\omega)\\
&=
\int g(F^{i}(\omega,x)) \,d\mathbb{P}(\omega)\\
&= \int g(\theta^{i}(\omega),f_{\omega}^{i}(x))\,d \mathbb{P}(\omega)=\int d(f_{\omega}^{i}(x), C_{\varphi})\, d\mathbb{P}(\omega),
\end{split}
\]
where the second equality above is due to Fubini's theorem.
Hence, combining the four equations above we conclude that 
\begin{equation}
\label{eq:holmes}
\begin{split}
\lim_{k\to \infty}\int \frac{1}{n_{k}}\sum_{i=0}^{n_{k}-1} d(f^{i}_{\omega}(x),C_\varphi)\,d\mathbb{P}(\omega)&=0.
\end{split}
\end{equation}

Furthermore, it follows from \eqref{dodoi2} that 
$$
d(f_\omega^{n_k}(x),C_\varphi)\leq d(f^i_\omega(x),C_\varphi)
$$
for all $0\leq i\leq n_k$ and for $\mathbb{P}$-almost every $\omega$ . Therefore, 
\[\begin{split}
\lim_{k\to \infty}\int d(f^{n_k}_{\omega}(x),C_\varphi)\,d\mathbb{P}(\omega)&=\lim_{k\to\infty} \int \frac{1}{n_{k}}\sum_{i=0}^{n_{k}-1} d(f^{n_{k}}_{\omega}(x),C_\varphi)\,d\mathbb{P}(\omega)\\
&\leq \lim_{k\to \infty}\int \frac{1}{n_{k}}\sum_{i=0}^{n_{k}-1} d(f^{i}_{\omega}(x),C_\varphi)\,d\mathbb{P}(\omega)=0,
\end{split}
\]
where the last equality follows from \eqref{eq:holmes}.
Thus, the equation above, together with \eqref{dodoi}, yields that $\int H(\omega)\,d \mathbb{P}(\omega)=0$, which implies that $H(\omega)=0$ for $\mathbb{P}$-almost every $\omega$.

To conclude the proof of the theorem in the case $M$ is compact, let $\Lambda''$ be the set of $\omega$ such that   the sequence $d(f_{\omega}^{n}(x),C_{\varphi})$ is decreasing and converges to $0$. Fix $\omega\in \Lambda''$ and let $c$ be any accumulation point of $f_{\omega}^{n}(x)$. By the choice of $\omega$, it is clear that $c\in C_\varphi$. Since $d(f^n_{\omega}(x), c)$ is decreasing, we conclude that $f^{n}_{\omega}(x)$ actually converges to $c\in C_\varphi$. Finally, combing \eqref{dodoi3} with \eqref{dodoi2}, we obtain that $\mathbb{P}(\Lambda'')=1$.

Now, we assume that $M$ is separable and every bounded closed subset is compact. 
Fix $c\in C_\varphi$, let $r>0$ and consider the compact metric space $\hat M=B[c,r]$. For every $\omega\in \Lambda'$ and every $y\in \hat M$,  by the definition of paracontraction we have $f_{\omega}(y)\in \hat M$. Thus, this allows us to consider the family of paracontractions $\{\hat f_\omega\}_{\omega\in\Lambda'}$, given by $\hat{f}_\omega:\hat M\to \hat M$, $\hat{f}_\omega(y)=f_\omega(y)$. Also, we consider the ergodic measure-preserving dynamical system $(\Lambda',\mathscr{F}',\mathbb{P}',\hat\theta)$ where 
$$
\mathscr{F}'=\{A\cap \Omega\colon A\in \mathscr{F}\},\quad \mathbb{P}'=\mathbb{P}_{|\mathscr{F}'} \quad \mbox{and} \quad \hat\theta\colon \Lambda'\to \Lambda',\,\hat{\theta}(\omega)=\theta(\omega).
$$
We note that $\mathbb{P}'$ is $\hat\theta$-invariant since $\mathbb{P}(\Lambda')=1$. Hence, it follows from the compact case applied to the random iteration of maps induced by the family $\{\hat f_{\omega}\}_{\omega\in\Lambda'}$ and $(\Lambda',\mathscr{F}',\mathbb{P}',\hat\theta)$ that, for every $x\in \hat M$, the sequence $\hat \varphi(n,\omega,x)$ converges to some point of $C_{\hat \varphi}\subset C_{\varphi}$ for $\mathbb{P}'$-almost every $\omega$. In particular, for every $x\in \hat M$, 
the sequence $\varphi(n,\omega,x)$ converges to some point of $C_{\varphi}$ for $\mathbb{P}$-almost every $\omega$. 

To conclude the proof of the theorem, we observe that every point of $x\in M$ belongs to some closed ball centered at $c$.

\end{proof}

\subsection{Proofs of Theorems \ref{projectionstationary} and \ref{th:conv_ran_prox}}

We note that Theorem \ref{projectionstationary} can be placed in the context of Theorem \ref{th:conv_ran_prox}. Thus, we first prove Theorem \ref{th:conv_ran_prox}, and  Theorem \ref{projectionstationary} will be obtained as a consequence. Nevertheless, we observe that Theorem \ref{projectionstationary} could be proved directly.

\begin{proof}[Proof of Theorem \ref{th:conv_ran_prox}]
We define $Q_{i}(x)\eqdef x + \alpha(J_{\lambda T_i}(x)-x)$ and observe that the sequence $(X_n)$ in \eqref{eq:ran_ppa} can be rewritten as
$$X_{n+1}=Q_{\xi_n}(X_n),$$
which is a particular instance of the random iteration \eqref{alg:1}. 
We also observe that 
\begin{equation*}
Q_{i}(x)=x \quad \mbox{if and only if}\quad  0\in T_i(x).
\end{equation*}
Therefore, proving that $(X_n)$ converges to a point of $Z$ is equivalent to proving that it converges to a point in the set $C$ given in \eqref{eq:sto_feas} with $f_i=Q_i$.

Hence, if we prove that $Q_i$ is a paracontraction, the theorem would be a consequence of Theorem \ref{th:one}, since $\R^k$ is a separable metric space such that every bounded closed subset is compact. The next lemma shows that $Q_i$ is an averaged map and, in particular, a paracontraction map. This is a classical result that we present here for the sake of completeness.

\begin{lemma}
\label{lm:resolv}
Let $T$ be a maximal monotone operator, $\lambda>0$, $\alpha\in(0,2)$ and consider the map $Q:\R^k\to\R^k$, defined as
\begin{equation*}
Q(x)=x+\alpha(J_{\lambda T}(x)-x).
\end{equation*}
Then, $Q$ is an averaged mapping with parameter $\alpha/2$.
\end{lemma}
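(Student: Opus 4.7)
The plan is to reduce the averagedness of $Q$ to the \emph{firm nonexpansiveness} of the resolvent $J_{\lambda T}$, which in turn follows directly from the monotonicity of $T$. Recall that a map $R\colon\R^k\to\R^k$ is firmly nonexpansive if
\[
\|R(x)-R(y)\|^2 + \|(I-R)(x)-(I-R)(y)\|^2 \leq \|x-y\|^2,
\]
equivalently (by expansion), if $\langle R(x)-R(y),x-y\rangle \geq \|R(x)-R(y)\|^2$. This is exactly the averaged-mapping inequality with parameter $\tfrac12$.

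First I would establish that $R := J_{\lambda T}$ is firmly nonexpansive. Given $x,y\in\R^k$, set $p = R(x)$ and $q = R(y)$. By the definition of the resolvent, $\tfrac{x-p}{\lambda}\in T(p)$ and $\tfrac{y-q}{\lambda}\in T(q)$, so the monotonicity condition \eqref{eq:monot} applied to the pairs $(p,\tfrac{x-p}{\lambda})$ and $(q,\tfrac{y-q}{\lambda})$ yields
\[
\big\langle p-q,\,(x-y)-(p-q)\big\rangle \geq 0,
\]
which rearranges to $\langle p-q,x-y\rangle \geq \|p-q\|^2$, i.e., firm nonexpansiveness of $R$.

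Next I would unwind the averaged-mapping inequality for $Q = (1-\alpha)I + \alpha R$ with parameter $\beta = \alpha/2 \in (0,1)$. Since $I-Q = \alpha(I-R)$, setting $u = x-y$ and $v = R(x)-R(y)$ the inequality to prove becomes
\[
\|(1-\alpha)u + \alpha v\|^2 + \tfrac{2-\alpha}{\alpha}\,\alpha^{2}\|u-v\|^2 \leq \|u\|^2.
\]
Expanding both squares and collecting terms, the $\|u\|^2$ coefficient equals $(1-\alpha)^2 + \alpha(2-\alpha) = 1$, the mixed-term coefficient equals $2\alpha(1-\alpha) - 2\alpha(2-\alpha) = -2\alpha$, and the $\|v\|^2$ coefficient equals $\alpha^2 + \alpha(2-\alpha) = 2\alpha$. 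Hence the left-hand side simplifies to $\|u\|^2 + 2\alpha\bigl(\|v\|^2 - \langle u,v\rangle\bigr)$, and the desired inequality reduces precisely to $\langle u,v\rangle \geq \|v\|^2$, which is the firm nonexpansiveness established in the first step.

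Since $\alpha/2 \in (0,1)$ when $\alpha \in (0,2)$, this shows that $Q$ is averaged with parameter $\alpha/2$, completing the proof. There is no real obstacle here beyond bookkeeping; the only content is the monotonicity-to-firm-nonexpansiveness reduction, and the rest is a one-line algebraic identity relating the averaged inequality for $(1-\alpha)I + \alpha R$ to that for $R$.
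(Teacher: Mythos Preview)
Your proof is correct and follows essentially the same route as the paper: both arguments use monotonicity of $T$ to obtain the key inequality $\langle J_{\lambda T}(x)-J_{\lambda T}(y),\,(x-J_{\lambda T}(x))-(y-J_{\lambda T}(y))\rangle\ge 0$ and then do quadratic algebra to reach the averaged inequality with parameter $\alpha/2$. The only cosmetic difference is packaging---the paper writes everything through the Yosida approximation $A_{\lambda T}=(I-J_{\lambda T})/\lambda$ and expands $\|Q(x)-Q(y)\|^2$ directly, whereas you isolate firm nonexpansiveness of $J_{\lambda T}$ as a named intermediate step and reduce the averaged inequality to it; the underlying computation is identical.
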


\begin{proof}
First, we consider the \emph{Yosida approximation} of $T$, the operator $A_{\lambda T}\eqdef(I-J_{\lambda T})/\lambda$, and observe that for all $x\in\R^k$ we have the following

\item[(i)] $A_{\lambda T}(x)\in T(J_{\lambda T}(x))$;
\item[(ii)] $x=J_{\lambda T}(x)+\lambda A_{\lambda T}(x)$;
\item[(iii)] $Q(x)=x-\alpha\lambda A_{\lambda T}$.

Therefore, by (iii), for any $x,y\in\R^k$ it holds 
\begin{equation*}
\begin{split}
\norm{Q(x)-Q(y)}^2 = & \norm{x-\alpha\lambda A_{\lambda T}(x)-(y-\alpha\lambda A_{\lambda T}(y))}^2\\
= & \norm{x-y}^2 - 2\alpha\lambda\inner{x-y}{A_{\lambda T}(x)-A_{\lambda T}(y)} \\
& + \alpha^2\lambda^2\norm{A_{\lambda T}(x)-A_{\lambda T}(y)}^2\\
= & \norm{x-y}^2 - 2\alpha\lambda\inner{J_{\lambda T}(x)-J_{\lambda T}(y)}{A_{\lambda T}(x)-A_{\lambda T}(y)}\\
& -2\alpha\lambda^2\norm{A_{\lambda T}(x)-A_{\lambda T}(y)}^2 + \alpha^2\lambda^2\norm{A_{\lambda T}(x)-A_{\lambda T}(y)}^2,
\end{split}
\end{equation*}
where the last equality above follows from (ii) and a simple manipulation. Now, combining (i) with the monotonicity property \eqref{eq:monot} and equation above, we obtain
\begin{equation*}
\begin{split}
\norm{Q(x)-Q(y)}^2 \leq & \norm{x-y}^2 - \alpha(2-\alpha)\lambda^2\norm{A_{\lambda T}(x)-A_{\lambda T}(y)}^2\\
= & \norm{x-y}^2 - \alpha(2-\alpha)\norm{\lambda A_{\lambda T}(x)-\lambda A_{\lambda T}(y)}^2.
\end{split}
\end{equation*}
Substituting (ii) in the second term of the right-hand side of the above equation we have
\begin{equation*}
\norm{Q(x)-Q(y)}^2 \leq \norm{x-y}^2 - \alpha(2-\alpha)\norm{x -J_{\lambda T}(x)-(y-J_{\lambda T}(y))}^2,
\end{equation*}
and, after simple manipulations, we conclude.
\end{proof}
This completes the proof of the theorem.
\end{proof}

\begin{proof}[Proof of Theorem \ref{projectionstationary}]
Consider the \emph{normal cone} $N_{C_i}$ of the non-empty closed convex set $C_i$, which is defined as 
\begin{equation*}
N_{C_i}(x)\eqdef\left\lbrace\begin{array}{ll}
\{v\in\R^k\,:\,\inner{v}{x'-x}\leq0\,\,\,\forall x'\in C_i\}\quad & \text{if }x\in C_i\\
\emptyset & \text{otherwwise}
\end{array} \right..
\end{equation*}
It is well-known that $N_{C_i}$ is a maximal monotone operator and that finding a point in $C_i$ is equivalent to finding a point in $N_{C_i}^{-1}(0)$. 
Therefore, the stochastic convex feasibility problem is a special case of the problem \eqref{eq:out1}.

Hence, the theorem follows from Theorem \ref{th:conv_ran_prox} noting that $P_{C_i}=(\mbox{Id}+N_{C_i})^{-1}$.

\end{proof}

\end{document}